\newtheorem{theorem}{Theorem}[section]
\newtheorem{definition}[theorem]{Definition}
\newtheorem{lemma}[theorem]{Lemma}
\newtheorem{corollary}[theorem]{Corollary}
\newtheorem{problem}[theorem]{Problem}
\newtheorem{conjecture}[theorem]{Conjecture}
\theoremstyle{definition}
\title{Color isomorphic even cycles and a related Ramsey problem}
\author{Gennian Ge$^{\text{a,}}$\thanks{e-mail: gnge@zju.edu.cn. Research supported by the National Natural Science Foundation of China under Grant No. 11971325, National Key Research and Development Program of China under Grant No. 2018YFA0704703, and Beijing Scholars Program.},~ Yifan Jing$^{\text{b,}}$\thanks{e-mail: yifanjing17@gmail.com. Research partially supported by a Wind Information Scholarship.},~Zixiang Xu$^{\text{a,}}$\thanks{e-mail: zxxu8023@qq.com.}~and Tao Zhang$^{\text{c,}}$\thanks{e-mail: zhant220@163.com. Research supported by the National Natural Science Foundation of China under Grant No. 11801109.} \\
\footnotesize $^{\text{a}}$ School of Mathematics Sciences, Capital Normal University, Beijing 100048, China.\\
\footnotesize $^{\text{b}}$ Department of Mathematics, University of Illinois at Urbana Champaign, Urbana, Illinois, 61801, USA. \\
\footnotesize $^{\text{c}}$ School of Mathematics and Information Science, Guangzhou University, Guangzhou 510006, China.\\}
\begin{document}

\date{}

\maketitle

\begin{abstract}
 In this paper, we first study a new extremal problem recently posed by Conlon and Tyomkyn~(arXiv: 2002.00921). Given a graph $H$ and an integer $k\geqslant 2$, let $f_{k}(n,H)$ be the smallest number of colors $c$ such that there exists a proper edge-coloring of the complete graph $K_{n}$ with $c$ colors containing no $k$ vertex-disjoint color-isomorphic copies of $H$. Using algebraic properties of polynomials over finite fields, we give an explicit proper edge-coloring of $K_{n}$ and show that $f_{k}(n, C_{4})=\Theta(n)$ when $k\geqslant 3$ and $n\rightarrow\infty$. The methods we used in the edge-coloring may be of some independent interest. We also consider a related generalized Ramsey problem. For given graphs $G$ and $H,$ let $r(G,H,q)$ be the minimum number of edge-colors (not necessarily proper) of $G$, such that the edges of every copy of $H\subseteq G$ together receive at least $q$ distinct colors. Establishing the relation to the Tur\'{a}n number of specified bipartite graphs, we obtain some general lower bounds for $r(K_{n,n},K_{s,t},q)$ with a broad range of $q$.
\end{abstract}

\medskip

\noindent {{\it Key words and phrases\/}: Generalized Ramsey number, edge-coloring, algebraic construction.}

\smallskip

\noindent {{\it AMS subject classifications\/}: 05C15, 05C35, 11T06.}

\section{Introduction}
  The problems of finding rainbow structures in proper edge-colorings of complete graphs have been widely studied in recent years. For example, in a recent breakthrough by Montgomery, Pokrovskiy and Sudakov~\cite{ProofRingel2020}, they confirmed the famous Ringel's conjecture, one of whose statements involves finding a rainbow copy of any tree with $n$ edges in a particular proper edge-coloring of $K_{2n+1}.$ There have also been many papers written on finding large or spanning structures in proper edge-colorings, see, e.g. \cite{Stefan2019, PLMS2019, JEMS2020, JCTB2018}. Very recently, Conlon and Tyomkyn \cite{Conlon2020} studied a new Ramsey type problem, which aims to find two or more vertex-disjoint color-isomorphic copies of some given graph in proper edge-colorings of complete graphs.

For $k,n\geqslant 2$ and a fixed graph $H$, define $f_{k}(n,H)$ to be the smallest integer $c$ such that there exists a proper edge-coloring of $K_{n}$ with $c$ colors containing no $k$ vertex-disjoint color-isomorphic repeats of $H$. One may ask the following natural question.

\begin{problem}\label{problem:MAIN}
Given $k\geqslant 2$ and a fixed graph $H,$ determine the order of growth of $f_{k}(n,H)$ as $n\rightarrow\infty.$
\end{problem}

In \cite{Conlon2020}, Conlon and Tyomkyn studied this problem systematically. They first made many useful observations on the properties of $f_k(n,H)$. For instance, $f_{k}(n,H)$ is monotone increasing in $n$, but decreasing in $k$. Moreover, $f_{k}(n,H)$ is monotone decreasing in $H$ with respect to taking subgraph, i.e, $f_{k}(n,H)\leqslant f_{k}(n,H')$ when $H'$ is a subgraph of $H.$ Also, since every proper coloring of $K_n$ uses at least $n-1$ colors, then $n-1\leqslant f_{k}(n,H)\leqslant \binom{n}{2}.$ Using the Lov\'{a}sz Local Lemma and Bukh's random algebraic method~\cite{Bukh2015}, they proved the following results.
\begin{theorem}[\cite{Conlon2020}]\label{thm:SomeknownResults} The followings hold.\medskip

 \noindent\emph{(i)} For any graph $H$ with $v$ vertices and $e$ edges,
  \begin{equation*}
    f_{k}(n,H)=O(\max\{n,n^{\frac{kv-2}{(k-1)e}}\}).
    \end{equation*}

    \noindent\emph{(ii)} For every graph $H$ containing a cycle, there exists $k=k(H)$ such that
\begin{equation*}
  f_{k}(n,H)=\Theta(n).
\end{equation*}
\end{theorem}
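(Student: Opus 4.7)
The plan for part (i) is to apply the symmetric Lov\'asz Local Lemma to a random proper edge-coloring of $K_n$ using $c = C \cdot \max\{n, n^{(kv-2)/((k-1)e)}\}$ colors, for a constant $C = C(H,k)$. The cleanest way to obtain a tractable random proper coloring is via Bukh's random algebraic framework: identify the vertex set with a subset of $\mathbb{F}_q^t$ for a prime power $q$ with $q^t$ of order $c$, and color $\{x,y\}$ by $P(x,y)$ where $P$ is a random symmetric low-degree polynomial; conditioning on the generic event that $y \mapsto P(x,y)$ is injective for each $x$ makes the coloring proper automatically, while retaining enough independence between the colors of edge-disjoint pairs for the estimates below.

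The bad events are $A_{\mathcal{C}}$, one for each ordered $k$-tuple $\mathcal{C}=(C_1,\dots,C_k)$ of pairwise vertex-disjoint copies of $H$ in $K_n$, namely that these copies are pairwise color-isomorphic. Two ingredients drive the LLL: (a) $P(A_{\mathcal{C}}) = O(c^{-(k-1)e})$, since after revealing the coloring of $C_1$, each of the remaining $(k-1)e$ edges must realize a prescribed color, each at cost $O(1/c)$ in the algebraic model; (b) the dependency degree $D = O(n^{kv-2})$, since two configurations interact only when they share an edge, and fixing a shared edge leaves $kv-2$ free vertex choices. The symmetric LLL inequality then reduces to requiring $n^{kv-2}/c^{(k-1)e} = O(1)$, which is precisely the claimed threshold; combined with the trivial lower bound $c \geqslant n-1$ from properness, this gives part (i).

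For part (ii), I would then choose $k = k(H)$ large enough that $(kv-2)/((k-1)e) \leqslant 1$, equivalently $k(e-v) \geqslant e-2$. Whenever $H$ contains a cycle with $e(H) > v(H)$, any $k \geqslant \lceil (e-2)/(e-v) \rceil$ works, and part (i) together with the trivial lower bound immediately yields $f_k(n,H) = \Theta(n)$.

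The main obstacle is the borderline case $e(H) = v(H)$ (unicyclic $H$, most importantly $H = C_\ell$), where the exponent in (i) stays strictly above $1$ for every finite $k$. I expect this case to need a refined algebraic construction rather than a uniform proper coloring: for example, a Cayley-type base coloring on an abelian group of order $\Theta(n)$ with an independent random perturbation inside each color class, so that $k$ pairwise vertex-disjoint color-isomorphic copies of $C_\ell$ force a rigid system of linear relations in the perturbation whose solution count becomes negligible in a first-moment or LLL estimate once $k$ is large enough in terms of $\ell$. Working out this algebraic accounting is the delicate step, after which matching with the $n-1$ lower bound completes part (ii).
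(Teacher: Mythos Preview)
This theorem is not proved in the paper; it is quoted from Conlon--Tyomkyn with only the one-line attribution that (i) follows from the Lov\'asz Local Lemma and (ii) from Bukh's random algebraic method. So the comparison is against that attribution.

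For part (i) your LLL outline is correct in spirit and matches what the paper says Conlon--Tyomkyn do. However, you have inverted the roles of the two tools: Bukh's random algebraic framework is \emph{not} needed to produce the random proper coloring in (i). A uniformly random $c$-coloring of $E(K_n)$ already gives edge colors that are independent, so $\Pr(A_{\mathcal C})=c^{-(k-1)e}$ exactly, and one simply adds ``two incident edges share a color'' as a second family of bad events (probability $1/c$, dependency degree $O(n)$), which is absorbed once $c\geqslant Cn$. Invoking an algebraic model here introduces dependencies you would then have to argue away, so it makes the argument harder rather than cleaner.

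For part (ii) there is a genuine gap. You correctly observe that part (i) settles the case $e(H)>v(H)$ but fails for unicyclic $H$ (in particular for cycles, to which the general case reduces by the monotonicity $f_k(n,H)\leqslant f_k(n,C_\ell)$). Your proposed fix --- a Cayley-type base coloring with a random perturbation and some ``rigid system of linear relations'' --- is not an argument; it is a hope, and in fact the additive structure of Cayley colorings tends to \emph{create} many color-isomorphic cycles rather than destroy them. The method that actually works, and the one the paper explicitly attributes to Conlon--Tyomkyn, is Bukh's random algebraic construction: take $q=\Theta(n)$, identify $V(K_n)$ with $\mathbb F_q$, color $\{x,y\}$ by a random low-degree symmetric polynomial $P(x,y)\in\mathbb F_q[x,y]$, and observe that a $k$-tuple of vertex-disjoint color-isomorphic copies of $H$ corresponds to an $\mathbb F_q$-point on an explicit variety. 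The Lang--Weil bound then shows that for $k$ sufficiently large (depending on $H$) this variety generically has $O(1)$ points over $\mathbb F_q$, after which a first-moment or Markov argument finishes. The paper even flags the Lang--Weil step as the reason the resulting $k(H)$ is large. Your write-up should put the random algebraic method where it belongs --- in part (ii), handling exactly the cycle case you could not close --- rather than in part (i), where it is unnecessary.
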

Conlon and Tyomkyn also suggested to study $f_k(n,H)$ when $H$ is an even cycle. Theorem~\ref{thm:SomeknownResults} implies $f_{k}(n,C_{4})=O(n^{\frac{2k-1}{2k-2}})$ (using the Lov\'{a}sz Local Lemma), and there is an integer $k$ such that $f_{k}(n,C_{4})=\Theta(n)$ (using the random algebraic method). The constant $k$ obtained by the random algebraic method is likely very large due to the Lang-Weil bound \cite{LangWeil1954}, and they asked whether $f_{2}(n,C_{4})=\Theta(n).$

Our first result in this paper studies $f_k(n,C_4)$. We try to estimate the smallest integer $k$ such that $f_{k}(n,C_{4})=\Theta(n)$, and we give the following result via an algebraic construction.

\begin{theorem}\label{thm:f12C4}
$f_{3}(n,C_{4})=\Theta(n).$
\end{theorem}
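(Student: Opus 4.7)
The lower bound $f_3(n, C_4) \geq n - 1$ is immediate since every proper edge-coloring of $K_n$ uses at least $\chi'(K_n) = n - 1$ colors, so the content of the theorem is the matching upper bound $f_3(n, C_4) = O(n)$. My plan is to exhibit an explicit algebraic construction: let $q$ be a prime power of odd characteristic with $q \approx n$, take the vertex set $V \subseteq \mathbb{F}_q^d$ for a small $d$ with $|V| \geq n$, and define a coloring $c \colon \binom{V}{2} \to \mathbb{F}_q^{d'}$ by a tuple of symmetric polynomials $(P_1, \ldots, P_{d'})$, chosen so that $q^{d'} = O(n)$. The leading polynomial $P_1$ should play the role of the sum map $P_1(x, y) = x + y$ (or its multi-coordinate analogue), which carries the useful rigidity that the $C_4$ color pattern O (cyclic pattern $X, Y, X, Y$ where both pairs of opposite edges share a color) is excluded in odd characteristic.

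First, I would verify properness: for each fixed $x$, the map $y \mapsto c(x, y)$ is injective, which follows from the sequential determination of $y$'s coordinates through $P_1, P_2, \ldots$. Next, I would classify the possible color-isomorphism types of a $C_4$ in a proper edge-coloring as (i) rainbow, (ii) pattern A, where exactly one pair of opposite edges shares a color, and (iii) pattern O. Pattern O is ruled out directly: if $c(v_1 v_2) = c(v_3 v_4)$ and $c(v_2 v_3) = c(v_1 v_4)$, then the $P_1$-coordinate yields $v_1 + v_2 = v_3 + v_4$ and $v_2 + v_3 = v_1 + v_4$, whose sum and difference force $2(v_1 - v_3) = 0$ and hence $v_1 = v_3$, contradicting vertex-distinctness.

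The main and most difficult step is to rule out, for each color sequence of pattern A (or rainbow) type, three vertex-disjoint $C_4$'s realizing that sequence. Assuming such a triple $Q_1, Q_2, Q_3$ exists, I would write the 12-vertex configuration as a solution of a polynomial system over $\mathbb{F}_q$, and the goal is to show that the system forces two of the 12 vertices to coincide. The hard part is that in the pure sum coloring $c(x, y) = x + y$, pattern A $C_4$'s with any fixed color triple already form a large one-parameter (``translation'') family, so three vertex-disjoint members exist unconditionally. Therefore $P_2$ must carry a nonlinear cross-term (roughly $P_2(x, y) = x_2 + y_2 + g(x_1, y_1)$ for a symmetric nonlinear $g$) that breaks this translation invariance, so that three disjoint same-color pattern A $C_4$'s would force $g$ to take a common value on three suitably independent inputs, which is then excluded by a polynomial degree argument. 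The technical heart of the proof is making this precise and uniform over every pattern A and rainbow color sequence, which is exactly where the ``algebraic properties of polynomials over finite fields'' advertised in the abstract enter the argument.
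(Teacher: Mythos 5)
Your high-level strategy---an explicit coloring of the vertex set by elements of a finite field, edge colors given by a symmetric polynomial, a properness check, and then a bound on the number of four-cycles realizing any fixed color pattern---is exactly the spirit of the paper's argument. But the proposal stops at precisely the point where the theorem's entire content lies. You correctly identify that ruling out three vertex-disjoint copies with the same color pattern is ``the main and most difficult step,'' you correctly observe that the pure sum coloring fails because pattern-A four-cycles come in translation families, and you then propose to fix this with a nonlinear correction term $g$ and ``a polynomial degree argument''---but you never exhibit the polynomial, never write down the resulting system, and never verify that the system has few solutions. That verification is not routine: for special color tuples the polynomial system can degenerate to a positive-dimensional solution set, and one must rule those degenerations out by hand. (There is also a small parameter inconsistency in your setup: with $q\approx n$ and colors in $\mathbb{F}_q^{d'}$, the constraint $q^{d'}=O(n)$ forces $d'=1$, which is incompatible with having both a linear coordinate $P_1$ and a separate nonlinear coordinate $P_2$; you would need $q\approx n^{1/2}$ and $d=d'=2$.)

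For comparison, the paper takes a single prime $p\equiv 5\pmod 6$, the vertex set $A=\{1,\dots,\lfloor (p-1)/3\rfloor\}\subseteq\mathbb{F}_p$, and the single polynomial $P(x,y)=x^2+xy+y^2$. Properness follows because $P(x,y)=P(x,z)$ forces $(y-z)(x+y+z)=0$ and the interval structure of $A$ forbids $x+y+z=0$. Then, for any cyclic color sequence $(a,b,c,d)$ with consecutive colors distinct, the four equations $P(x,y)=a,\dots,P(w,x)=d$ are reduced (after the substitution $u=x+z$, $v=x-z$) to a quadratic in $v^2$; the degenerate cases $a-b=c-d$ and $k_2=k_1=k_0=0$ are eliminated by showing each forces either $x+z=y+w=0$ or $(a-d)(b-c)=0$, both impossible; and the count of solutions for $v$ is cut from $4$ to $2$ using again that no two elements of $A$ sum to zero. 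This yields at most two four-cycles per color pattern, hence no three disjoint color-isomorphic copies. All of this machinery corresponds to the part of your argument that is currently only a plan, so as written the proposal has a genuine gap at its central step.
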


This result improves the best known upper bound $O(n^{\frac{5}{4}})$ obtained by Theorem~\ref{thm:SomeknownResults} (i), and it perhaps gives some evidence that $f_{2}(n,C_{4})$ is also of order $\Theta(n)$.

As the authors mentioned in \cite{Conlon2020}, the problem of studying $f_k(n,H)$ was motivated by a generalized Ramsey problem raised by Krueger~\cite{GenRamsey2020}. Our next result in this paper studies this generalized Ramsey problem.
The classical graph Ramsey problem asks for the minimum number $n$ such that every $k$-coloring of the edges of $K_{n}$ forces a monochromatic copy of $K_{p}$. By fixing $n,$ the inverse problem asks for the minimum $k$ such that there exists an edge-coloring of $K_{n}$ with $k$ colors, and each copy of $K_{p}$ receives at least $q$ colors. For general graphs $G$ and $H,$ the generalized Ramsey number $r(G,H,q)$ denotes the minimum number of edge-colors of $G$, such that the edges of every copy of $H\subseteq G$ together receive at least $q$ distinct colors. This function was first studied by Elekes, Erd\H{o}s and F\"{u}redi (see Section $9$ of \cite{Erdos1981}). Later, Erd\H{o}s and Gy\'{a}rf\'{a}s~\cite{Combinatorica1997} systematically studied the function $r(K_{n},K_{p},q)$ and showed many improved results. After that, a number of wonderful papers \cite{JCTB2000,  PLMS2015, SIDMA2020, MonoK42008, ColorEnergy2019, EJC2001}
studied this problem, and obtained lots of interesting results of this fashion.
In recent years, some questions about distinct distances and difference sets with similar flavors have also been studied in \cite{SIDMA2020, DistinctDistance2018, ColorEnergy2019}.

In this paper, we are interested in the bipartite version of generalized Ramsey number $r(K_{n,n},K_{s,t},q),$ which has been studied in \cite{JCTB2000, JCTB1975, ARS2003}. In particular,  Axenovich, F\"{u}redi, and Mubayi~\cite{JCTB2000} obtained a series of improved results via many different methods such as Lov\'{a}sz Local Lemma and algebraic methods. We list some results of \cite{JCTB2000} in Tables~\ref{table:K2233} and \ref{table:Kss}.

In our studies of $r(K_{n,n},K_{s,t},q)$, we will always assume that $s,t$ and $q$ are fixed integers and $n\rightarrow \infty.$ Axenovich, F\"{u}redi, and Mubayi~\cite{JCTB2000} determined the linear and quadratic thresholds of the function $r(K_{n,n},K_{s,s},q)$. More precisely, they determined the smallest integers $q_1(s)$ and $q_2(s)$, where $q_1(s)=s^{2}-2s+3$ and $q_2(s)=s^{2}-s+2$, such that $r(K_{n,n},K_{s,s},q_1(s))=\Theta(n)$ and $r(K_{n,n},K_{s,s},q_2(s))=\Omega(n^{2})$. Up to now, nothing has been shown about $r(K_{n,n},K_{s,s},q)$ beyond the trivial lower bound $\Omega(n)$ when $s^{2}-2s+3<q<s^{2}-s+2.$

Our next results give some general lower bounds for $r(K_{n,n},K_{s,t},q)$ with a broad range of $q$. Recall that $\mathrm{ex}(n,H)$ denotes the maximum number of edges in an $H$-free graph $G$ with $n$ vertices.

\begin{theorem}\label{thm:GeneralLowerBound}
  For given integers $t\geqslant s\geqslant 4,$  we have
  \begin{equation*}
    r(K_{n,n},K_{s,t},st-e(H)+1)=\Omega\Big(\frac{n^{4}}{\textup{ex}(n^{2},H)}\Big),
  \end{equation*}
  where $H$ is a bipartite graph with bipartition $H=H_{1}\bigcup H_{2}$ such that $|H_{1}|\leqslant \lfloor\frac{s}{2}\rfloor$ and $|H_{2}|\leqslant \lfloor\frac{t}{2}\rfloor.$
\end{theorem}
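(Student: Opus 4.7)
The plan is to prove the contrapositive: for a sufficiently small absolute constant $c>0$, any edge-coloring $\phi$ of $K_{n,n}$ using $N<c\cdot n^{4}/\mathrm{ex}(n^{2},H)$ colors must admit some copy of $K_{s,t}$ whose edges together use at most $st-e(H)$ distinct colors.

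First, I would construct an auxiliary bipartite graph $\Gamma$ with parts $\binom{A}{2}$ and $\binom{B}{2}$, where $A\cup B$ is the bipartition of $K_{n,n}$; note $|V(\Gamma)|\leq n^{2}$. I put an edge between $\{a_1,a_2\}\in\binom{A}{2}$ and $\{b_1,b_2\}\in\binom{B}{2}$ whenever the $4$-cycle $a_1 b_1 a_2 b_2$ contains a monochromatic pair of opposite edges, i.e.\ $\phi(a_1 b_1)=\phi(a_2 b_2)$ or $\phi(a_1 b_2)=\phi(a_2 b_1)$. Each edge of $\Gamma$ thus records one matching pair of same-colored edges in $K_{n,n}$.

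Next I would lower-bound $|E(\Gamma)|$. Writing $m_c$ for the size of color class $c$, convexity gives $\sum_c\binom{m_c}{2}\geq N\binom{n^{2}/N}{2}=\Omega(n^{4}/N)$. At most $O(n^{3})$ of these same-color pairs share a $K_{n,n}$-vertex (each edge has only $2(n-1)$ neighbours), so assuming $N=o(n)$ --- which I may, since otherwise the conclusion holds trivially --- the matching pairs alone still number $\Omega(n^{4}/N)$, hence $|E(\Gamma)|=\Omega(n^{4}/N)$. For $c$ small, the hypothesis forces $|E(\Gamma)|$ to exceed $\mathrm{ex}(n^{2},H)$ by an arbitrarily large constant factor, so the \erdos--Simonovits supersaturation theorem yields $\Omega(n^{2v(H)})$ labelled copies of $H$ in $\Gamma$. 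Since two uniformly chosen 2-subsets of an $n$-element set intersect with probability $O(1/n)$, a first-moment/union-bound argument shows that all but a $o(1)$-fraction of these copies embed $H_1$ as $|H_1|$ pairwise disjoint 2-sets of $A$ and $H_2$ as $|H_2|$ pairwise disjoint 2-sets of $B$. I fix such a ``spread'' copy.

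Let $A^{\ast}\subseteq A$ be the union of the $|H_1|$ disjoint 2-sets on the $A$-side, so $|A^{\ast}|=2|H_1|\leq s$, and let $B^{\ast}\subseteq B$ be the analogous set with $|B^{\ast}|=2|H_2|\leq t$. Enlarge to $A'\supseteq A^{\ast}$ and $B'\supseteq B^{\ast}$ of sizes exactly $s$ and $t$, giving $K_{A',B'}\cong K_{s,t}$. Each of the $e(H)$ edges of the embedded $H$-copy yields a monochromatic pair of matching edges of $K_{A^{\ast},B^{\ast}}$, and the key observation is that, thanks to the pairwise disjointness of the 2-sets on each side, any two monochromatic pairs coming from distinct edges of $H$ are supported on entirely disjoint edges of $K_{A^{\ast},B^{\ast}}$. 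Equivalently, the ``same-color constraint'' graph $G^{\mathrm{ctr}}$ on the $|A^{\ast}|\cdot|B^{\ast}|$ edges of $K_{A^{\ast},B^{\ast}}$ is a matching of size $e(H)$, and hence has exactly $|A^{\ast}|\cdot|B^{\ast}|-e(H)$ components. Since two edges lying in the same component of $G^{\mathrm{ctr}}$ must receive the same color, $K_{A^{\ast},B^{\ast}}$ uses at most $|A^{\ast}|\cdot|B^{\ast}|-e(H)$ colors; the additional $st-|A^{\ast}|\cdot|B^{\ast}|$ edges of $K_{A',B'}$ can only introduce at most that many more colors, yielding at most $st-e(H)$ colors in $K_{A',B'}$, which is exactly what is needed.

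The main obstacle is the pairwise disjointness of the 2-sets in the $H$-embedding. Without it, the $e(H)$ monochromatic pairs could collapse within one large color class and the naive counting bound would only yield a color deficit of $\Omega(\sqrt{e(H)})$ rather than $e(H)$; the supersaturation-plus-cleaning step is precisely what guarantees the ``spread'' embedding producing the full deficit. The remaining ingredients --- convexity for the edge count, a union bound for the cleaning, and the inflation from $K_{A^{\ast},B^{\ast}}$ to $K_{s,t}$ --- are routine.
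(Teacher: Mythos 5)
Your overall strategy is the same as the paper's: build the auxiliary bipartite graph on $\binom{A}{2}\times\binom{B}{2}$ whose edges record monochromatic ``diagonal'' pairs, lower-bound its edge count by convexity ($\sum_c\binom{m_c}{2}\geqslant n^4/4N$), and then argue that once this exceeds $\mathrm{ex}(n^2,H)$ a copy of $H$ appears and forces a $K_{s,t}$ with at most $st-e(H)$ colors. The paper stops there, asserting that \emph{every} edge of the found copy of $H$ contributes one repeat. You correctly sensed that this is not automatic: if the images of the vertices of $H_1$ (resp.\ $H_2$) are overlapping $2$-sets, the $e(H)$ coincidence pairs need not be independent. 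For instance, three independent edges of $H$ can be embedded so that the three constraints are $\chi(a_3b_3)=\chi(a_2b_2)$, $\chi(a_2b_2)=\chi(a_1b_1)$, $\chi(a_3b_3)=\chi(a_1b_1)$, a triangle in your graph $G^{\mathrm{ctr}}$ yielding only two repeats from three $H$-edges. Your observation that a ``spread'' copy makes $G^{\mathrm{ctr}}$ a matching, hence gives the full deficit $e(H)$, is the right repair, and your endgame (components of $G^{\mathrm{ctr}}$, then inflating $K_{A^{\ast},B^{\ast}}$ to $K_{s,t}$) is correct.

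The genuine gap is in how you produce the spread copy. Erd\H{o}s--Simonovits supersaturation for a \emph{bipartite} $H$ under the hypothesis $e(\Gamma)\geqslant C\cdot\mathrm{ex}(M,H)$ with $M=|V(\Gamma)|\approx n^2$ does \emph{not} give $\Omega(M^{v(H)})=\Omega(n^{2v(H)})$ copies; it gives only on the order of $M^{v(H)}\bigl(e(\Gamma)/M^2\bigr)^{e(H)}$ copies, and here $e(\Gamma)\approx n^4/N$ is far below $M^2=n^4$ unless $N=O(1)$. Meanwhile the degenerate configurations you want to discard number up to $O(n^{2v(H)-1})$, which can dwarf the guaranteed count, so the ``all but a $o(1)$-fraction are spread'' claim does not follow from a first-moment comparison; copies of $H$ in $\Gamma$ are not uniformly random $2$-sets (an adversarial coloring can concentrate $\Gamma$ on pairs through few vertices). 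A second, smaller issue: discarding the $O(n^3)$ vertex-sharing pairs by ``assuming $N=o(n)$, else the conclusion is trivial'' is not justified whenever $n^4/\mathrm{ex}(n^2,H)=\omega(n)$ (e.g.\ $H=C_6$ gives a target of $\Omega(n^{4/3})$), so the intermediate regime $N\in[cn,\,n^4/\mathrm{ex}(n^2,H)]$ is not actually covered. You would need either a direct argument that some copy of $H$ must be spread (e.g.\ by bounding degenerate copies through extremal numbers of the relevant quotient structures), or a restriction of the auxiliary graph that rules degeneracies out from the start.
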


When $s=t$ is even, let $H$ be the even cycle of length $s.$ Using the upper bound $\textup{ex}(n,C_{2k})=O(n^{1+\frac{1}{k}})$ by Bondy~\cite{Bondy1974}, we obtain the following corollary.

\begin{corollary}\label{cor:kss}
When $s\geqslant 4$ is an even integer, we have
  \begin{equation*}
    r(K_{n,n},K_{s,s},s^{2}-s+1)=\Omega(n^{2-\frac{4}{s}}).
  \end{equation*}
\end{corollary}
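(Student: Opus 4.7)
The plan is to apply Theorem~\ref{thm:GeneralLowerBound} directly, with the parameter choice $t = s$ and the auxiliary bipartite graph $H = C_s$, and then substitute Bondy's extremal bound for even cycles.

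First I would verify that $C_s$ is an admissible choice of $H$ in Theorem~\ref{thm:GeneralLowerBound}. Since $s \geqslant 4$ is even, $C_s$ is bipartite, and its canonical bipartition $V(C_s) = H_1 \cup H_2$ satisfies $|H_1| = |H_2| = s/2 = \lfloor s/2 \rfloor$. Hence the size constraints $|H_1|\leqslant \lfloor s/2 \rfloor$ and $|H_2|\leqslant \lfloor t/2 \rfloor = \lfloor s/2 \rfloor$ are met. Next I would match the color threshold: since $e(C_s) = s$, we obtain $st - e(H) + 1 = s^2 - s + 1$, which is exactly the number of colors in the statement of the corollary.

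With these choices, Theorem~\ref{thm:GeneralLowerBound} gives
\begin{equation*}
r(K_{n,n},K_{s,s},s^{2}-s+1) \;=\; \Omega\!\left(\frac{n^{4}}{\textup{ex}(n^{2},C_{s})}\right).
\end{equation*}
It remains only to bound $\textup{ex}(n^{2},C_{s})$ from above. Writing $s = 2k$ with $k = s/2$, Bondy's bound $\textup{ex}(m,C_{2k}) = O(m^{1 + 1/k})$ yields, upon substituting $m = n^{2}$,
\begin{equation*}
\textup{ex}(n^{2},C_{s}) \;=\; O\!\left(n^{2(1 + 2/s)}\right) \;=\; O\!\left(n^{2 + 4/s}\right).
\end{equation*}
Dividing gives $n^{4}/n^{2 + 4/s} = n^{2 - 4/s}$, which is the desired lower bound.

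Since the corollary is a direct specialization of Theorem~\ref{thm:GeneralLowerBound}, there is no genuine obstacle; all the substantive content has been packed into that theorem. The only items to be careful about are parity (so that $s/2$ is an integer and equals $\lfloor s/2 \rfloor$, allowing $C_s$ to fit the bipartition hypothesis) and ensuring the edge count $e(C_s) = s$ lines up with the color threshold $s^2 - s + 1$. Both are immediate from the assumption that $s$ is even.
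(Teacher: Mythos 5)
Your proposal is correct and is exactly the paper's argument: take $t=s$, choose $H=C_s$ (whose bipartition classes each have size $s/2=\lfloor s/2\rfloor$ and which has $e(C_s)=s$, giving the threshold $s^2-s+1$), and plug Bondy's bound $\textup{ex}(n^2,C_s)=O(n^{2+4/s})$ into Theorem~\ref{thm:GeneralLowerBound}. No issues.
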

As we see in Table~\ref{table:Kss}, our lower bound is not far from the best known upper bound $r(K_{n,n},K_{s,s},s^{2}-s+1)=O(n^{2-\frac{2}{s}}),$ and Corollary~\ref{cor:kss} gives the first non-trivial lower bound when $s>4$ is an even integer and $q=s^{2}-s+1.$ Note that the choice of $H$ in Theorem~\ref{thm:GeneralLowerBound} is flexible, which leads to a broad range of $q$ in function $r(K_{n,n},K_{s,t},q).$ Let $q_1(s,t)=st-s-t+3$ and $q_2(s,t)=st-\frac{s+t}{2}+2$. The results in \cite{GenRamsey2020} imply $r(K_{n,n},K_{s,t},q_1(s,t))=\Theta(n)$ and $r(K_{n,n},K_{s,t},q_2(s,t))=\Theta(n^2)$. By choosing different graphs $H$ satisfying the conditions in Theorem~\ref{thm:GeneralLowerBound}, as corollaries, we are able to get super-linear lower bounds for $r(K_{n,n},K_{s,t},q)$ with many distinct parameters $q.$

We give two corollaries in this fashion for the asymmetric version, that is, when $t>s$. The corollaries are obtained by choosing $H$ as $1$-subdivision of complete graphs and $1$-subdivision of complete bipartite graphs, respectively. We utilise known extremal numbers of $1$-subdivision of complete graphs $\textup{ex}(n,\textup{sub}(K_{t}))=O(n^{\frac{3}{2}-\frac{1}{4t-6}})$, and $1$-subdivision of complete bipartite graphs $\textup{ex}(n,\textup{sub}(K_{s,t}))=O(n^{\frac{3}{2}-\frac{1}{2s}})$ (see \cite{ConlonJanzerLee2019, JanzerEJC2019}).

\begin{corollary}
  When $s\geqslant 6$ is an even integer and $t=2\binom{s/2}{2}$, we have
  \begin{equation*}
    r(K_{n,n},K_{s,t},st-t+1)=\Omega(n^{1+\frac{1}{s-3}}).
  \end{equation*}
\end{corollary}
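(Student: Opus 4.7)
The plan is to apply Theorem~\ref{thm:GeneralLowerBound} with $H = \textup{sub}(K_{s/2})$, the $1$-subdivision of the complete graph on $s/2$ vertices. This $H$ is bipartite with a natural bipartition $H = H_1 \cup H_2$, where $H_1$ is the set of $s/2$ original vertices of $K_{s/2}$ and $H_2$ is the set of $\binom{s/2}{2}$ subdivision vertices (one for each edge of $K_{s/2}$). Since $s$ is even, $|H_1| = s/2 = \lfloor s/2 \rfloor$; and since $t = 2\binom{s/2}{2}$, one has $|H_2| = \binom{s/2}{2} = \lfloor t/2 \rfloor$, so both partition-size constraints of Theorem~\ref{thm:GeneralLowerBound} are saturated. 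Moreover, each edge of $K_{s/2}$ becomes a path of length $2$ after subdivision, so $e(H) = 2\binom{s/2}{2} = t$, which makes $st - e(H) + 1 = st - t + 1$ match the parameter in the statement.

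Theorem~\ref{thm:GeneralLowerBound} then gives
\[
r(K_{n,n}, K_{s,t}, st-t+1) = \Omega\!\left(\frac{n^{4}}{\textup{ex}(n^{2}, \textup{sub}(K_{s/2}))}\right).
\]
I next feed in the Conlon--Janzer--Lee bound $\textup{ex}(N, \textup{sub}(K_m)) = O(N^{3/2 - 1/(4m-6)})$ cited before the corollary, with $N = n^{2}$ and $m = s/2$, which yields
\[
\textup{ex}(n^{2}, \textup{sub}(K_{s/2})) = O\!\left(n^{3 - 1/(s-3)}\right).
\]
Dividing this into $n^{4}$ produces the claimed lower bound $\Omega(n^{1 + 1/(s-3)})$.

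The argument is essentially a bookkeeping exercise once the correct auxiliary graph has been chosen; the substantive input has already been absorbed into Theorem~\ref{thm:GeneralLowerBound} and into the cited extremal estimate. The only real design choice --- and the only place where anything could plausibly go wrong --- is the verification that, for the precise value $t = 2\binom{s/2}{2}$, the $1$-subdivision of $K_{s/2}$ simultaneously saturates both partition-size constraints of Theorem~\ref{thm:GeneralLowerBound} and has exactly $t$ edges, so that the parameter $st - e(H) + 1$ reduces cleanly to $st - t + 1$. Consequently I do not anticipate any significant technical obstacle beyond identifying this optimal $H$.
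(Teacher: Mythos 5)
Your proof is correct and follows exactly the route the paper intends: taking $H=\textup{sub}(K_{s/2})$ with the original vertices in $H_1$ and the subdivision vertices in $H_2$, verifying $|H_1|=\lfloor s/2\rfloor$, $|H_2|=\lfloor t/2\rfloor$, $e(H)=t$, and then plugging the Conlon--Janzer--Lee bound $\textup{ex}(N,\textup{sub}(K_m))=O(N^{3/2-1/(4m-6)})$ with $N=n^2$, $m=s/2$ into Theorem~\ref{thm:GeneralLowerBound}. The arithmetic $n^4/n^{3-1/(s-3)}=n^{1+1/(s-3)}$ checks out, so nothing further is needed.
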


\begin{corollary}
  When $s\geqslant 8$ is an even integer and $t=\frac{s^{2}}{8}$, we have
  \begin{equation*}
    r(K_{n,n},K_{s,t},st-t+1)=\Omega(n^{1+\frac{4}{s}}).
  \end{equation*}
\end{corollary}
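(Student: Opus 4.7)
The plan is to apply Theorem~\ref{thm:GeneralLowerBound} with $H$ chosen to be the $1$-subdivision of the complete bipartite graph $K_{a,a}$, where $a := s/4$. Note that requiring $t = s^{2}/8$ to be an integer forces $4 \mid s$, so $a$ is a positive integer, and from $s \geqslant 8$ we have $a \geqslant 2$.

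First I would verify that this $H$ satisfies the hypothesis of Theorem~\ref{thm:GeneralLowerBound}. The subdivision $\textup{sub}(K_{a,a})$ is bipartite with one class $H_{1}$ consisting of the $2a$ original vertices of $K_{a,a}$ and the other class $H_{2}$ consisting of the $a^{2}$ subdivision vertices. With $a = s/4$ we get $|H_{1}| = 2a = s/2 = \lfloor s/2 \rfloor$ and $|H_{2}| = a^{2} = s^{2}/16 = t/2 = \lfloor t/2 \rfloor$, so both size constraints hold. The edge count is $e(H) = 2a^{2} = s^{2}/8 = t$, hence $st - e(H) + 1 = st - t + 1$, exactly matching the parameter in the statement.

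Next I would plug in the \turan-number estimate for subdivided complete bipartite graphs cited in the paper, namely $\textup{ex}(n, \textup{sub}(K_{s',t'})) = O(n^{3/2 - 1/(2s')})$ for $2 \leqslant s' \leqslant t'$. Taking $s' = t' = a = s/4$ gives $\textup{ex}(n^{2}, H) = O\bigl((n^{2})^{3/2 - 2/s}\bigr) = O(n^{3 - 4/s})$. Feeding this into Theorem~\ref{thm:GeneralLowerBound} yields $r(K_{n,n}, K_{s,t}, st - t + 1) = \Omega\bigl(n^{4}/\textup{ex}(n^{2}, H)\bigr) = \Omega(n^{1 + 4/s})$, which is the claimed bound.

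There is no substantive obstacle once Theorem~\ref{thm:GeneralLowerBound} and Janzer's extremal bound are in hand; the entire content of the corollary is parameter bookkeeping. The only slightly delicate point is choosing the balanced subdivision $\textup{sub}(K_{s/4,s/4})$ rather than an unbalanced one: by AM--GM, $e(\textup{sub}(K_{a,b})) = 2ab$ with $a+b$ fixed is maximized at $a=b$, and this balanced choice is precisely what saturates both side-constraints $|H_{1}| \leqslant \lfloor s/2 \rfloor$ and $|H_{2}| \leqslant \lfloor t/2 \rfloor$ while producing exactly $t$ edges, which explains the choice $t = s^{2}/8$ and the implicit divisibility condition $4 \mid s$.
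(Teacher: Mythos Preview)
Your proof is correct and follows exactly the approach the paper intends: choose $H=\textup{sub}(K_{s/4,s/4})$, verify the bipartition sizes $|H_1|=s/2$ and $|H_2|=t/2$ and the edge count $e(H)=t$, then plug the Janzer bound $\textup{ex}(n,\textup{sub}(K_{a,a}))=O(n^{3/2-1/(2a)})$ with $a=s/4$ into Theorem~\ref{thm:GeneralLowerBound}. Your observation that the hypothesis implicitly forces $4\mid s$ (so that $t=s^2/8$ is an integer and $a=s/4$ makes sense) is a useful clarification that the paper leaves unstated.
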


Also, there are not many lower bounds for $r(K_{n,n},K_{s,t},q)$ that have been found when $q<q_{1}(s,t)=st-s-t+3.$ The only known case is when $q=2,$ $r(K_{n,n},K_{s,s},2)=\Omega(n^{\frac{1}{s}})$~\cite{JCTB2000}. We obtain some new sub-linear lower bounds by using Theorem~\ref{thm:GeneralLowerBound} and the famous K\"{o}vari-S\'{o}s-Tur\'{a}n bound~\cite{Kovari1954} $\textup{ex}(n,K_{m,\ell})=O(n^{2-\frac{1}{m}})$ with $m\leqslant \ell$.

 \begin{corollary}
   When $t\geqslant s\geqslant 4,$ let $m\leqslant \frac{s}{2}$ and $\ell\leqslant\frac{t}{2}$ be integers with $m\leqslant\ell,$ we have
   \begin{equation*}
     r(K_{n,n},K_{s,t},st-m\ell+1)=\Omega(n^{\frac{2}{m}}).
   \end{equation*}
   In particular, set $s=t$ and $m=\ell=\frac{s}{2},$ we have
   \begin{equation*}
     r(K_{n,n},K_{s,s},\frac{3s^{2}}{4}+1)=\Omega(n^{\frac{4}{s}}).
   \end{equation*}
 \end{corollary}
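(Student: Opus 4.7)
The plan is to apply Theorem~\ref{thm:GeneralLowerBound} with the auxiliary bipartite graph $H = K_{m,\ell}$, and then to substitute the classical K\"ov\'ari--S\'os--Tur\'an upper bound on $\textup{ex}(\cdot,K_{m,\ell})$. This is a direct plug-in, so most of the work is just verifying that $K_{m,\ell}$ is an admissible choice in the theorem and tracking the exponents.

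First, I would check admissibility. The canonical bipartition of $K_{m,\ell}$ has part sizes $m$ and $\ell$; since $m,\ell$ are integers with $m \leqslant s/2$ and $\ell \leqslant t/2$, we automatically have $m \leqslant \lfloor s/2\rfloor$ and $\ell \leqslant \lfloor t/2\rfloor$, satisfying the hypothesis of Theorem~\ref{thm:GeneralLowerBound}. Moreover $e(K_{m,\ell}) = m\ell$, so the theorem yields
\[
  r\bigl(K_{n,n},K_{s,t},st-m\ell+1\bigr) \;=\; \Omega\!\left(\frac{n^{4}}{\textup{ex}(n^{2},K_{m,\ell})}\right).
\]

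Second, I would insert the K\"ov\'ari--S\'os--Tur\'an bound $\textup{ex}(N,K_{m,\ell}) = O(N^{2-1/m})$, valid under $m\leqslant \ell$. Taking $N = n^{2}$ gives $\textup{ex}(n^{2},K_{m,\ell}) = O(n^{4-2/m})$, so that
\[
  r\bigl(K_{n,n},K_{s,t},st-m\ell+1\bigr) \;=\; \Omega\!\left(\frac{n^{4}}{n^{4-2/m}}\right) \;=\; \Omega(n^{2/m}),
\]
which is the first claimed bound. The special case is immediate: setting $s=t$ and $m=\ell = s/2$ (permitted when $s$ is even, $s\geqslant 4$), we get $st - m\ell + 1 = s^{2} - s^{2}/4 + 1 = \tfrac{3s^{2}}{4}+1$ and $2/m = 4/s$.

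There is essentially no genuine obstacle in this corollary --- the entire content is pushed into Theorem~\ref{thm:GeneralLowerBound} and into the K\"ov\'ari--S\'os--Tur\'an theorem. The only thing one has to be careful about is the ordering convention $m \leqslant \ell$ required by K\"ov\'ari--S\'os--Tur\'an (so that the smaller part governs the exponent $1/m$), and the integrality check on the part sizes against $\lfloor s/2\rfloor$ and $\lfloor t/2\rfloor$; both are immediate from the hypotheses.
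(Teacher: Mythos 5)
Your proposal is correct and follows exactly the paper's route: apply Theorem~\ref{thm:GeneralLowerBound} with $H=K_{m,\ell}$ (whose bipartition sizes satisfy the hypotheses since $m,\ell$ are integers with $m\leqslant s/2$, $\ell\leqslant t/2$) and insert the K\"ov\'ari--S\'os--Tur\'an bound $\textup{ex}(n^{2},K_{m,\ell})=O(n^{4-2/m})$. The exponent tracking and the specialization $s=t$, $m=\ell=s/2$ are both carried out correctly.
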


The paper is organized as follows. In Section~\ref{section:C4}, we prove Theorem~\ref{thm:f12C4} by giving an algebraic construction. In Section~\ref{section:lowbounds}, we prove Theorem~\ref{thm:GeneralLowerBound}. Finally we conclude with some remarks and further questions in Section~\ref{section:Conclusion}.\smallskip

\begin{minipage}{\textwidth}
 \begin{minipage}[t]{0.5\textwidth}
  \centering
     \makeatletter\def\@captype{table}\makeatother\caption{$r(K_{n,n},K_{s,s},q)$ with $s=2,3$}\label{table:K2233}
       \begin{tabular}{ccc}
\hline
$q$& $r(K_{n,n},K_{2,2},q)$& $r(K_{n,n},K_{3,3},q)$ \\
\hline
$2$& $(1+o(1))\sqrt{n}$&$(1+o(1))n^{\frac{1}{3}}$\\
$3$& $>\lfloor\frac{2n}{3}\rfloor$; $\leqslant n-1$ & $O(n^{\frac{4}{7}})$\\
$4$&$n^{2}$&$O(n^{\frac{2}{3}})$\\
$5$&$n^{2}$&$O(n^{\frac{4}{5}})$\\
$6$&$n^{2}$&$\Theta(n)$\\
$7$&$n^{2}$&$\Omega(n)$; $O(n^{\frac{4}{3}})$\\
$8$&$n^{2}$&$\lceil\frac{n}{2}\lceil\frac{3n}{2}\rceil\rceil$\\[0.5mm]
$9$&$n^{2}$&$n^{2}$\\
\hline
\end{tabular}
  \end{minipage}
  \begin{minipage}[t]{0.5\textwidth}\label{table:Kss}
   \centering
        \makeatletter\def\@captype{table}\makeatother\caption{$r(K_{n,n},K_{s,s},q)$ with $s\geqslant 4$}\label{table:Kss}
        \begin{tabular}{cc}
\hline
$q$& $r(K_{n,n},K_{s,s},q)$ \\
\hline
$2$& $\Omega(n^{\frac{1}{s}})$\\
$s^{2}-2s+2$& $O(n^{1-\frac{1}{2s-1}})$\\[0.9mm]
$s^{2}-2s+3$&$\Theta(n)$\\
$s^{2}-s+1$&$O(n^{2-\frac{2}{s}})$\\
$s^{2}-s+2$&$\geqslant C_{s}(n^{2}-n)$; $<(1-c_{s})n^{2}$\\
$s^{2}-\lfloor\frac{2s-1}{3}\rfloor+1$&$>n^{2}-2\lfloor\frac{s-2}{3}\rfloor(n-1)$\\
$s^{2}-\lfloor\frac{s}{2}\rfloor+1$&$n^{2}-\lfloor\frac{s}{2}\rfloor+1$\\[0.5mm]
$s^{2}$&$n^{2}$\\
\hline
\end{tabular}
   \end{minipage}
\end{minipage}

\section{Even cycle $C_{4}$}\label{section:C4}

In this section, we are going to prove Theorem~\ref{thm:f12C4}. The proof goes as follows. We first choose a field $\mathcal{K}$, and construct a map $\pi:V\to \mathcal{K}$, where $V=V(K_n)$. Then we choose a symmetric polynomial $P\in \mathcal{K}[x,y]$, and color the edge $ab$ by $P(\pi(a),\pi(b))$. We aim to show that under this construction, the edge-coloring we obtained has bounded maximum degree in each color class (thus by a standard application of Vizing's theorem, we are able to get a proper edge coloring), the image $|P(\pi(V),\pi(V))|$\footnote{Given $A,B\subseteq \mathcal{K}$, let $P(A,B)$ denote $\{P(a,b): a\in A,b\in B\}$.} is $O(n)$, and we cannot find too many color isomorphic copies of $C_4$ in this coloring.

One may choose $\mathcal{K}$ a field of characteristic $0$. Thus, by the symmetric Elekes--Ronyai theorem~\cite{JRT}, our symmetric polynomial $P(x,y)$ has the form $f(u(x)+u(y))$ or $f(u(x)u(y))$, where $f,u$ are some one variable polynomials in $\mathcal{K}[x]$. However, constant many color isomorphic copies of $C_4$ in this case would imply the set $u(\pi(V))$ has low additive or multiplicative energy, and this gives us that the image $|P(\pi(V),\pi(V))|$ is $\Theta(\pi(V)^2)$, providing $\pi(V)=O(n^{\frac{1}{2}})$. Hence the maximum degree of the color class is $\Omega(n^{\frac{1}{2}})$, we will no longer have a proper edge coloring.

We fix this problem by choosing $\mathcal{K}$ to be a finite field $\mathbb F_p$, and we choose our polynomial $P$ to be an expanding polynomial over $\mathbb R$. This means, if we view our polynomial as an element in $\mathbb R[x,y]$, and $\pi:V\to \mathbb R$, by taking an injection $\pi$, the edge coloring is proper, we will not have many color isomorphic copies of $C_4$, but the image $|P(\pi(V),\pi(V))|$ is quadratic in $n$. Now, by taking $p=O(n)$, mapping everything down to $\mathbb F_p$ will help us to decrease $|P(\pi(V),\pi(V))|$ to $O(n)$. By choosing the polynomial $P(x,y)$, the vertex map $\pi$, and the character $p$ carefully, and using the ideas from the properties of the resultants of polynomials, we manage to show that, we can still get a proper coloring, and taking this projection down to $\mathbb F_p$ will not create too many color isomorphic copies of $C_4$.

\begin{proof}[Proof of Theorem~\ref{thm:f12C4}]
Let $p\equiv5\pmod{6}$ be a sufficiently large prime, and
\[ A=\Big\{1,2,\ldots,\Big\lfloor\frac{p-1}{3}\Big\rfloor\Big\}
\]
be a subset of $\mathbb{F}_{p}$.

We remark that the purpose of choosing $p\equiv5\pmod{6}$ is to make $-3$ a non-residue modulo $p$. This follows from the law of quadratic reciprocity, which states that if $r$ and $s$ are odd primes, and we define $r^{*}$ to be $r$ if $r\equiv1\pmod{4}$ or $-r$ if
$r \equiv3\pmod{4},$ then $s$ is a quadratic residue modulo $r$ if and only if $r^{*}$ is a quadratic residue modulo
$s$. Applying this with $r=3$ and $s=p$, we see that $-3$ is a quadratic residue modulo $p$ if and only
if $p$ is a quadratic residue modulo $3$, i.e. $p\equiv1\pmod{3}$.

Now, let $G$ be a complete graph with vertex set $A$. Let $P(x,y)=x^2+xy+y^2$ be an element in $\mathbb F_p[x,y]$. Let $P^*(A,A)$ be the restricted image, that is
\[
P^*(A,A)=\{P(a,b):a,b\in A,a\neq b\}.
\]
Define the edge coloring
\[
\chi: E(G)\to P^*(A,A),
\]
such that for every $x,y\in A$ with $x\neq y$, we assign color $P(x,y)$ to the edge $xy$. Note that $x^{2}+xy+y^{2}=(x+\frac{y}{2})^{2}+\frac{3}{4}y^{2}$, and $-3$ is a non-quadratic residue modulo $p$, then $x^{2}+xy+y^{2}\ne0$. Hence $P^*(A,A)\subseteq \mathbb F^*_p$.

Next, we claim that the edge coloring $\chi$ is proper. Suppose there is $a\in\mathbb F_p^*$, such that two edges $xy$ and $xz$ are assigned to the same color $a$. That is, we have $x^{2}+xy+y^{2}=a$ and $x^{2}+xz+z^{2}=a$. Hence $(y-z)(x+y+z)=0$. By the way we construct the vertex set $A$, $x+y+z\ne0$. This implies $y=z$.  Therefore, two distinct edges $xy$ and $xz$ can not be assigned the same color, then $\chi$ is proper.

Let $a,b,c,d\in\mathbb{F}^*_{p}$, and $a\ne b,\ b\ne c,\ c\ne d,\ d\ne a$. Assume the colors $a,b,c,d$ are incident to a four-cycle $x,y,z,w\in A$. Then we have
\begin{align}
&x^{2}+xy+y^{2}=a,\label{c4eq1}\\
&y^{2}+yz+z^{2}=b,\label{c4eq2}\\
&z^{2}+zw+w^{2}=c,\label{c4eq3}\\
&w^{2}+wx+x^{2}=d.\label{c4eq4}
\end{align}

From Equations (\ref{c4eq1}) and (\ref{c4eq2}), we obtain
\begin{equation}\label{c4eq5}
    y=\frac{a-b}{x-z}-x-z.
\end{equation}
Similarly, from Equations (\ref{c4eq3}) and (\ref{c4eq4}), we obtain
\begin{equation}\label{c4eq6}
    w=\frac{c-d}{z-x}-z-x.
\end{equation}
Substituting Equations (\ref{c4eq5}) and (\ref{c4eq6}) into Equations (\ref{c4eq1}) and (\ref{c4eq4}), respectively, we have
\begin{align}
&x^2+x\Big(\frac{a-b}{x-z}-x-z\Big)+\Big(\frac{a-b}{x-z}-x-z\Big)^{2}=a,\label{c4eq7}\\
&x^2+x\Big(\frac{c-d}{z-x}-z-x\Big)+\Big(\frac{c-d}{z-x}-z-x\Big)^{2}=d.\label{c4eq8}
\end{align}
Making the change of variables $u\mapsto x+z$ and $v\mapsto x-z$ in Equations (\ref{c4eq7}) and (\ref{c4eq8}), we get $v\neq 0$, and
\begin{align}
&v^{4}+3u^{2}v^{2}-6(a-b)uv-2(a+b)v^{2}+4(a-b)^{2}=0,\label{c4eq9}\\
&v^{4}+3u^{2}v^{2}-6(c-d)uv-2(c+d)v^{2}+4(c-d)^{2}=0.\label{c4eq10}
\end{align}

Assume that $a-b=c-d$. Then from Equations (\ref{c4eq9}) and (\ref{c4eq10}), we have $a+b=c+d$, hence $a=c,b=d$. By Equations (\ref{c4eq5}) and (\ref{c4eq6}), we have $y+w+2x+2z=0$, and by the symmetry, we can also obtain $x+z+2y+2w=0$. Therefore, $x+z=y+w=0$, which contradicts the construction of $A$.

Now we have that $a-b\ne c-d$. From Equations (\ref{c4eq9}) and (\ref{c4eq10}) we can obtain
\begin{align}
uv=\frac{(a+b-c-d)v^{2}}{3(c-d-a+b)}+\frac{2}{3}(c-d+a-b).\label{c4eq11}
\end{align}
Substituting Equation (\ref{c4eq11}) into Equation (\ref{c4eq9}), and then multiplying by $3(c-d-a+b)^{2}$, we obtain
\begin{align}
k_{2}v^{4}+k_{1}v^{2}+k_{0}=0,\label{eq:quad}
\end{align}
where
\begin{align*}
&k_{2}=4(a^2-ab-2ac+ad+b^2+bc-2bd+c^2-cd+d^2),\\
&k_{1}=4(a-b-c+d)(a^2-2ac-3ad-b^2+3bc+2bd+c^2-d^2),\\
&k_{0}=4(a-b-c+d)^2(a^2-2ab-ac+ad+b^2+bc-bd+c^2-2cd+d^2).
\end{align*}

Assume first that $k_{2}=k_{1}=k_{0}=0$. Since $a-b-c+d\ne0$, we have
\begin{align}
&a^2-ab-2ac+ad+b^2+bc-2bd+c^2-cd+d^2=0,\label{c4eq12}\\
&a^2-2ab-ac+ad+b^2+bc-bd+c^2-2cd+d^2=0.\label{c4eq13}
\end{align}
From Equations (\ref{c4eq12}) and (\ref{c4eq13}), we obtain $(a-d)(b-c)=0$, which contradicts the choice of $a,b,c,d$.

Recall that $v=x-z$ and $x,z\in A$.
Thus if at least one of $k_{i}$ $(i=0,1,2)$ is not $0$, then there are at most $4$ solutions for $v$ in $(A-A)\setminus\{0\}$. Also, if the number of solutions for $v$ is at least $3$, then there exist two solutions $v_{1},v_{2}$ such that $v_{1}+v_{2}=0$. In this case, let $x_{1},z_{1},u_{1}$ ($x_{2},z_{2},u_{2}$) be the corresponding solutions with respect to $v_{1}$ ($v_{2}$, respectively). Then $v_{1}^{2}=v_{2}^{2}$, and by Equation (\ref{c4eq11}), we have $u_{1}v_{1}=u_{2}v_{2}$. Hence $u_{1}=-u_{2}$. Note that since $v_{i}=x_{i}-z_{i}$ and $u_{i}=x_{i}+z_{i}$, we have $x_{1}-z_{1}=-x_{2}+z_{2}$ and $x_{1}+z_{1}=-x_{2}-z_{2}$. Then we can get $x_{1}+x_{2}=0$, which contradicts the construction of $A$.

Hence there are at most $2$ solutions for $v$ in $(A-A)\setminus\{0\}$. For any fixed $v$, by Equation (\ref{c4eq11}), there is a unique solution for $u$. Note that $x=\frac{u+v}{2}$ and $z=\frac{u-v}{2}$,
 and by Equations (\ref{c4eq5}) and (\ref{c4eq6}), $y$ and $w$ are uniquely determined by $x$ and $z$, there are at most $2$ solutions for $(x,y,z,w)$. Therefore, the number of copies of four-cycles with edge colors $a,b,c,d$ is at most two.
\end{proof}

\section{Lower bounds for $r(K_{n,n},K_{s,t},q)$}\label{section:lowbounds}
In this section, we are going to prove Theorem~\ref{thm:GeneralLowerBound}. The ideas used in this proof are mainly inspired by the recent work of Conlon and Tyomkyn~\cite{Conlon2020}. It will often be helpful to think of $r(K_{n,n},K_{s,t},q)$ in terms of repeated colors. Let $\mathscr{C}$ be the collection of colors, and let $\chi:E(G)\to\mathscr{C}$ be an edge coloring of graph $G$. Let $H$ be a subgraph of $G$. If a color
$c\in\mathscr{C}$ appears on exactly $r_{c}$ edges in $\chi(E(H))$, then we say such color $c$ is repeated $r_{c}-1$
times in $H$. We say $H$ has $r$ \emph{repeats} if $r=\sum\limits_{c\in\chi(E(H))}(r_{c}-1)$, where every color $c\in\chi(E(H))$ is repeated $r_{c}-1$ times, and the sum is taking over all colors in $\chi(E(H))$ (hence $r_c\geqslant1$).

\begin{proof}[Proof of Theorem~\ref{thm:GeneralLowerBound}]
  Suppose that $n$ is sufficiently large, and let
  \[
  \chi:E(K_{n,n})\to \mathscr{C}
  \]
  be an edge coloring of $K_{n,n}$, where $\mathscr{C}$ is the collection of colors. Suppose $K_{n,n}$ has the vertex bipartition $A\cup B$. We label the vertices in $A$ and $B$ respectively, such that $A=\{a_{1},a_{2},\ldots,a_{n}\}$ and $B=\{b_{1},b_{2},\ldots,b_{n}\}$, with $a_{1}<a_{2}<\cdots<a_{n}$ and $b_{1}<b_{2}<\cdots<b_{n}$.

  Now we construct the auxiliary graph $F$ as follows. $F$ is a bipartite graph, with vertex set $U\cup W$, such that $U=\binom{A}{2}$ and $W=\binom{B}{2}$. Thus $|U|=|W|=\binom{n}{2}$. Moreover, we require the elements in $U$ to have the form $(a_i,a_j)$ with $a_i>a_j$, and elements in $W$ to have the form $(b_k,b_\ell)$ with $b_k>b_\ell$. For every $(a_i,a_j)\in U$ and $(b_k,b_\ell)\in W$, $(a_i,a_j)$ and $(b_k,b_\ell)$ are adjacent in $F$ if $\chi(a_ib_k)=\chi(a_jb_\ell)$ in the edge coloring of $K_{n,n}$. Given $c\in\mathscr{C}$, let $e_c$ be the number of edges of color $c$ in the image of $\chi$, we have
  \[
  e(F)=\sum_{c\in\mathscr{C}}\binom{e_{c}}{2}\geqslant \frac{(\sum_{c\in\mathscr{C}}e_{c})^{2}}{4|\mathscr{C}|}=\frac{n^{4}}{4|\mathscr{C}|}.
  \]
  Hence $|\mathscr{C}|\geqslant \frac{n^4{}}{4e(F)}$.

  Next, we are going to bound $e(F)$, and hence get a lower bound on $|\mathscr{C}|$. Let $H$ be a bipartite graph, with vertex set $H_1\cup H_2$, such that
  $|H_{1}|\leqslant \lfloor\frac{s}{2}\rfloor$ and $|H_{2}|\leqslant \lfloor\frac{t}{2}\rfloor.$ Suppose $e(F)\geqslant \mathrm{ex}(|V(F)|,H)$, then $F$ contains a copy of $H$. Observe that, by the definition of auxiliary graph $F$, every edge of the copy $H$ in $F$ will contribute exactly one repeat in the edge coloring $\chi$ of $K_{n,n}$. Thus, there are at least $e(H)$ repeats in $\chi$. Moreover, all these $e(H)$ repeats span at most $2|H_1|$ vertices in $A$, and at most $2|H_2|$ vertices in $B$. Thus by the upper bound on $|H_1|$ and $|H_2|$, we are able to find a copy of $K_{s,t}$ in $K_{n,n}$ such that $|\chi(E(K_{s,t}))|$ is at most $st-e(H)$. Therefore, if the image of $\chi$ does not contain a $K_{s,t}$ with less than $st-e(H)+1$ colors, we have $e(F)\leqslant \mathrm{ex}(|V(F)|,H)$, finishing the proof.
\end{proof}

\section{Concluding remarks}\label{section:Conclusion}
 Although the proof of Theorem~\ref{thm:f12C4} only requires an elementary computation, it is motivated by considering the resultants of polynomials. Let us first recall the definition of the resultant of polynomials over $\mathcal{K}[x]$.
\begin{definition}
Let $f(x),g(x)\in \mathcal{K}[x]$, such that $f(x)=a_{m}x^{m}+\cdots+a_{1}x+a_{0}$ and $g(x)=b_{n}x^{n}+\cdots+b_{1}x+b_{0}$. Then the \emph{resultant of $f$ and $g$} is defined by the determinant of the following $(m+n+2)\times (m+n+2)$ matrix,
\begin{align*}
\left(
            \begin{array}{cccccccc}
              a_{0} & a_{1} & \cdots & a_{m} &   &   &&   \\
               & a_{0} & \cdots  & a_{m-1} & a_{m} && & \\
                &   & \cdots  & \cdots  & \cdots & &  &\\
              &  &   &   &   & a_{0} & \cdots  & a_{m}\\
              b_{0} & b_{1} & \cdots &\cdots & b_{n} &     &\\
               & b_{0} & \cdots & \cdots  & b_{n-1} & b_{n} & & \\
                &   &  \cdots &  \cdots &\cdots  & \cdots  &   &\\
                &   &   &   &b_{0} & \cdots& \cdots  & b_{n}\\
            \end{array}
          \right),
\end{align*}
which is denoted by $R(f,g)$.
\end{definition}
The resultant of two polynomials has the following property, which is crucial.
\begin{lemma}[\cite{Fuhrmann2012}]
Let $f,g\in \mathcal{K}[x]$ be one variable polynomials. Suppose $h(x)=\gcd(f(x),\allowbreak g(x))$, where $\deg(h(x))\geqslant 1$. Then $R(f,g)=0$. In particular, if $f$ and $g$ have a common root in $\mathcal{K}$, then $R(f,g)=0$.
\end{lemma}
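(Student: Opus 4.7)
The plan is to exploit the standard interpretation of the resultant as the determinant of a linear map, so that $R(f,g)=0$ becomes equivalent to the existence of a nontrivial solution $(u,v)$ of $uf+vg=0$ with prescribed degree bounds. Concretely, I would introduce
\begin{equation*}
    L:\mathcal{K}[x]_{<n}\times\mathcal{K}[x]_{<m}\to\mathcal{K}[x]_{<m+n},\qquad L(u,v)=uf+vg,
\end{equation*}
where $\mathcal{K}[x]_{<d}$ denotes the space of polynomials of degree strictly less than $d$. Reading the matrix of $L$ in the natural monomial bases, one recovers the Sylvester matrix appearing in the definition of $R(f,g)$ up to a reordering of rows and columns, which alters the determinant only by a nonzero sign. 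Consequently $R(f,g)=0$ if and only if $L$ has a nontrivial kernel.

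The core of the argument is then to extract such a kernel element from the hypothesis $\deg(\gcd(f,g))\geqslant 1$. Writing $f=hf_{1}$ and $g=hg_{1}$ with $\deg f_{1}=m-\deg h<m$ and $\deg g_{1}=n-\deg h<n$, I would take $(u,v):=(g_{1},-f_{1})$ and verify the three requirements: it lies in $\mathcal{K}[x]_{<n}\times\mathcal{K}[x]_{<m}$ by the degree bounds just noted, it is nonzero because $f$ and $g$ are nonzero, and it maps to $L(u,v)=g_{1}\cdot hf_{1}-f_{1}\cdot hg_{1}=0$. This forces $\det L=0$ and hence $R(f,g)=0$.

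For the ``in particular'' assertion, if $\alpha\in\mathcal{K}$ is a common root of $f$ and $g$, then $(x-\alpha)$ divides both in the principal ideal domain $\mathcal{K}[x]$, so $\gcd(f,g)$ has degree at least one and the previous step applies. I do not expect any serious obstacle here; the only mildly delicate point is matching the precise row/column convention of the Sylvester matrix displayed in the excerpt against the matrix of $L$, but this is bookkeeping and any convention yields the same vanishing criterion.
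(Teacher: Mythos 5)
Your proof is correct and is the standard argument: the paper states this lemma as a cited result (from the reference on resultants) without supplying its own proof, so there is nothing internal to compare against. The key step --- exhibiting the nonzero kernel element $(u,v)=(g_{1},-f_{1})$ with $\deg g_{1}=n-\deg h<n$ and $\deg f_{1}=m-\deg h<m$, so that $uf+vg=0$ forces the Sylvester determinant to vanish --- is exactly the textbook route, and the ``in particular'' clause follows correctly since a common root $\alpha\in\mathcal{K}$ gives $(x-\alpha)\mid\gcd(f,g)$. The only point worth flagging is that the matrix displayed in the paper is declared to be $(m+n+2)\times(m+n+2)$, which does not match the usual $(m+n)\times(m+n)$ Sylvester matrix your map $L$ produces; this is an imprecision in the paper's definition rather than in your argument, and, as you say, any consistent convention yields the same vanishing criterion.
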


For the multivariable polynomials, the above lemma still holds when we project down to a one variable polynomial ring. For any $f,g\in \mathcal{K}[x_{1},\dots,x_{n}]$, let $R(f,g;x_{i})$ denote the resultant of $f$ and $g$ with respect to the variable $x_{i}$.

In the proof of Theorem~\ref{thm:f12C4}, Equations (\ref{c4eq1}), (\ref{c4eq2}), (\ref{c4eq3}), and (\ref{c4eq4}) actually give us four polynomials in $\mathbb F_p[x,y,z,w]$
\begin{align*}
&f_{1}(x,y,z,w)=x^{2}+xy+y^{2}-a=0,\\
&f_{2}(x,y,z,w)=y^{2}+yz+z^{2}-b=0,\\
&f_{3}(x,y,z,w)=z^{2}+zw+w^{2}-c=0,\\
&f_{4}(x,y,z,w)=w^{2}+wx+x^{2}-d=0.
\end{align*}
By computing the resultants $f_5(x,z):=R(f_1,f_2;y)$, $f_6(x,z):=R(f_3,f_4;w)$, and $g(x):=R(f_5,f_6;z)$ (which are actually the similar computations we did in the proof), we will get $g(x)=0$, and $g(x)$ is a quadratic polynomial on $x^2$, which is an analogue of Equation (\ref{eq:quad}). The proof is finished by analyzing the coefficients in $g(x)$, as we did for $k_0,k_1,k_2$ in Equation (\ref{eq:quad}).

 Several interesting questions remain about the function $f_{k}(n,H)$ when $H$ is a longer even cycle. One that immediately arises from Theorem \ref{thm:SomeknownResults} (ii) and Theorem~\ref{thm:f12C4}.
 \begin{problem}\label{problem:Remian1}
   For any integer $\ell\geqslant 2$, estimate the smallest $k$ such that $f_{k}(n,C_{2\ell})=\Theta(n).$
 \end{problem}
  The $\ell=2$ case of Problem \ref{problem:Remian1} is the main topic of this paper. Deriving a similar bound for $f_{2}(n,C_{4})$ is likely to be difficult. The next case, when $\ell=3$, now seems an attractive candidate for further exploration. The idea of using resultants of polynomials mentioned above may be useful, and we suspect our method could be used to obtain some good upper bounds on $k$ for the general $\ell\geqslant 3$.
  We are also interested in the problem of $f_{2}(n,C_{2\ell})$ and we provide the following conjecture.
\begin{conjecture}\label{conj:evencycle}
 For any $\ell\geqslant 3,$ $f_{2}(n,C_{2\ell})=\Omega(n^{2-\frac{2}{\ell}}).$
\end{conjecture}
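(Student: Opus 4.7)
The plan is to attempt a proof by contradiction, organising the argument around color profiles of $C_{2\ell}$'s. Suppose that $K_n$ admits a proper edge-coloring $\chi$ with $c=o(n^{2-2/\ell})$ colors in which no two vertex-disjoint color-isomorphic copies of $C_{2\ell}$ exist. For each $C_{2\ell}\subseteq K_n$, define its \emph{color profile} to be the cyclic sequence of its $2\ell$ edge-colors modulo the dihedral action, and let $N_P$ denote the number of cycles with profile $P$. Because each color class of $\chi$ is a matching, a profile together with an ordered starting vertex determines the cycle uniquely whenever it exists, so any fixed vertex lies in at most $2\ell$ profile-$P$ cycles.

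The hypothesis forces the cycles in each profile class to pairwise share a vertex. Writing $d_v$ for the number of profile-$P$ cycles through $v$, the bound $d_v\le 2\ell$ combined with $\sum_v d_v=2\ell N_P$ gives
\[
\binom{N_P}{2}\;\le\;\sum_v\binom{d_v}{2}\;\le\;\ell(2\ell-1)\,N_P,
\]
hence $N_P=O(\ell^2)$. Summing $\sum_P N_P=\Theta(n^{2\ell})$ and invoking the crude bound $|\{P\}|\le c^{2\ell}$ on the number of distinct profiles then recovers only the trivial estimate $c=\Omega(n)$.

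To reach the conjectured exponent $2-2/\ell$ one has to exploit more of the matching structure, the natural input being Bondy--Simonovits' bound $\mathrm{ex}(n,C_{2\ell})=O(n^{1+1/\ell})$. A promising route is to split each $C_{2\ell}$ into two ordered half-cycles of length $\ell$ joining the same endpoint pair, and, for each ordered $(u,v)$, form an auxiliary bipartite graph $H_{u,v}$ whose edges encode length-$\ell$ color-profiles of $u$-to-$v$ paths. Producing a pair of vertex-disjoint color-isomorphic $C_{2\ell}$ then reduces to finding two edges of some $H_{u,v}$ that carry the same profile and whose internal paths are vertex-disjoint, which fits the shape of a supersaturation problem for $C_{2\ell}$. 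The main obstacle will be to control the edge multiplicities and overlaps of the $H_{u,v}$'s so that the $C_{2\ell}$-extremal threshold translates into the exponent $2-2/\ell$; this likely needs a supersaturation strengthening of Bondy--Simonovits that is sensitive to the proper-coloring structure and is probably the principal reason the conjecture remains open.
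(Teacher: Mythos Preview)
The statement you are addressing is presented in the paper as a \emph{conjecture}, not a theorem; the authors give no proof. They remark that Conlon and Tyomkyn verified the case $\ell=3$, and in a note added in proof they record that Janzer subsequently established the full conjecture. There is therefore no proof in the paper against which to compare your proposal.

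As to the proposal itself: the first half is correct but, as you yourself observe, only recovers the trivial $\Omega(n)$ bound, since the crude profile count $c^{2\ell}$ discards all structural information about the cycle. The second half is only an outline, and even as an outline it has a gap. Two length-$\ell$ paths from $u$ to $v$ with the same colour profile and disjoint interiors combine into a \emph{single} $C_{2\ell}$ through $u$ and $v$; they do not produce two vertex-disjoint colour-isomorphic $C_{2\ell}$'s. So the ``reduction'' you describe does not obviously yield the object the conjecture asks for. What is genuinely needed is a mechanism that locates two \emph{full} cycles with identical colour profiles on pairwise disjoint vertex sets, and this is exactly where the difficulty sits. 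Your closing diagnosis---that a supersaturation-type strengthening sensitive to the proper-colouring structure is the missing ingredient---is accurate, and supplying it was the content of Janzer's subsequent paper cited by the authors.
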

Conlon and Tyomkyn~\cite{Conlon2020} verified this conjecture when $\ell=3.$ The proof relies on the upper bound for $\textup{ex}(n,\theta_{\ell,t})$~\cite{thetagraph1983} and the observation that the endpoints of theta graph $\theta_{\ell,t}$ cannot be in the same part when $\ell$ is odd (this key observation is also useful in \cite{BukhTailtheta2018}).

\textbf{Note added:} Very recently, Janzer~\cite{Janzer2020} developed a method for finding suitable cycles of given length and then proved Conjecture~\ref{conj:evencycle} in a more general form.

\section*{Acknowledgements}
 Yifan Jing would like to thank J\'{o}zsef Balogh for helpful discussions. The authors express their gratitude to the anonymous reviewer for the detailed and constructive comments which are very helpful for the improvement of the presentation of this paper.

\bibliographystyle{abbrv}
\bibliography{Zt_Jyf_Xzx_Color}

\end{document}